\theoremstyle{plain}
 \newtheorem{thm}{Theorem}
 \newtheorem{prop}{Proposition}
 \newtheorem{cor}{Corollary}
\theoremstyle{definition}
 \newtheorem{dfn}{Definition}
 \newtheorem{rem}{Remark}
\title{Higher ramification loci over homogeneous spaces}
\author{Yu-Chao Tu}
\thanks{
Department of Mathematics, University of Utah, Salt Lake City, UT 84102, U.S.A.}
\thanks{
Email: tu@math.utah.edu
}
\thanks{
Mathematics Subject Classification (2010): 14E22, 14L40
}
\thanks{
The author is partially supported by the Focused Research Grant(FRG).
}
\begin{document}
\begin{abstract}
We generalize the Gaffney--Lazarsfeld theorem on higher ramification loci of branched coverings of $\mathbb{P}^n$ to  homogeneous spaces with Picard number one.
\end{abstract}
\maketitle

\section{Introduction}

Let $f:X\to Y$ be a covering map, i.e., a finite surjective morphism, between irreducible projective varieties over $\mathbb C$ with $Y$ smooth.
A local measure of the ramification of $f$ around a point $x$ of $X$ is the \emph{local degree} $e_f(x)$ defined by Gaffney and Lazarsfeld in \cite{GL80}. The map $f$ is ramified at $x$ precisely when $e_f(x)>1$ and intuitively, the local degree around $x$ counts the number of sheets coming together near $x$. It is natural to consider the higher ramification loci
\[ R_{\ell}(f):= \{x \in X \mid e_f(x) > \ell \}.
\]
\noindent For example, $R_1(f)$ is the classical ramification locus of $f$. When $X$ is normal, a result of Zariski (\cite[Proposition 2]{Zar58}) says that the ramification locus of $f$ has pure codimension one or is empty.

Our main result offers a Zariski-type conclusion for the higher ramification loci over homogeneous spaces of Picard number one.

\begin{thm}\label{thm1}
Let $f:X\to Y$ be a covering map of degree $d > 1$ between irreducible projective varieties of dimension $n$ over $\mathbb C$. Assume that $Y=G/P$ is a projective homogeneous space,
where $G$ is a connected linear algebraic group, and assume that $Y$ has Picard number one.
If $\ell\leq\min\{d-1, n \}$,  $R_{\ell}(f)$ is nonempty and
\[
\mathrm{codim}_X R_{\ell}(f) \leq \ell.
\]
\end{thm}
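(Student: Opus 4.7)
The plan is to follow the strategy of Gaffney--Lazarsfeld \cite{GL80}, replacing the classical Fulton--Hansen connectedness theorem on $\mathbb{P}^n$ with its analogue on a homogeneous $Y = G/P$ with Picard number one. The homogeneity of $Y$ supplies nefness of $T_Y$, while the Picard-one hypothesis supplies the positivity of divisor classes required to run the Fulton--Hansen machinery and its intersection-theoretic refinements.

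Concretely, I would first form the $(\ell+1)$-fold fiber product
\[
Z := X \times_Y X \times_Y \cdots \times_Y X \;\subset\; X^{\ell+1},
\]
which contains the small diagonal $\Delta \cong X$ as an irreducible component. Since $d > \ell$, a general fiber of $f$ has at least $\ell+1$ distinct points, so the open stratum $W_0 \subset Z$ of tuples with pairwise distinct entries is nonempty, and its closure $W := \overline{W_0}$ has pure dimension $n$. Unwinding the definition of the local degree gives
\[
R_\ell(f) \;=\; \pi_1(W \cap \Delta),
\]
where $\pi_1 \colon X^{\ell+1} \to X$ is the first projection, restricting to an isomorphism on $\Delta$. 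The theorem thus becomes the assertion that $W \cap \Delta$ is nonempty of codimension at most $\ell$ in $\Delta \cong X$.

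For nonemptiness I would invoke the generalized Fulton--Hansen connectedness theorem applied to the surjective finite morphism $f^{\ell+1} \colon X^{\ell+1} \to Y^{\ell+1}$, whose image has dimension $(\ell+1)n > \ell n$: the hypothesis that $Y = G/P$ has Picard number one forces the preimage $Z$ of the small diagonal $\Delta_{Y^{\ell+1}} \subset Y^{\ell+1}$ to be connected. Combined with an analysis of the stratification of $Z$ by coincidence patterns of coordinates, connectedness implies that $\overline{W_0}$ must meet $\Delta$. For the codimension bound, I would invoke a refined Bezout-type intersection estimate available in the homogeneous setting: each component of $W \cap \Delta$ has dimension at least $n-\ell$, with the positivity input being the nefness of the normal bundle of $\Delta_{Y^{\ell+1}} \subset Y^{\ell+1}$, namely $T_Y^{\oplus \ell}$, which is globally generated by homogeneity. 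An alternative is induction on $\ell$: cut $X$ by preimages of general members of the ample generator of $\mathrm{Pic}(Y)$ to reduce to the case $n = \ell$, where the codimension statement degenerates into the already established nonemptiness.

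The main obstacle I anticipate is precisely this codimension estimate. The generalized Fulton--Hansen theorem readily yields connectedness, but upgrading it to the quantitative bound $\mathrm{codim}_X R_\ell(f) \leq \ell$ requires the full positivity available from the $G/P$ structure. Ampleness of the tangent bundle does not extend from $\mathbb{P}^n$ to all Picard-one $G/P$ (Grassmannians being the standard example), and slicing $Y$ by ample divisors does not preserve homogeneity; so threading the codimension estimate together using only nefness of $T_Y$ and the Picard-one hypothesis is where the technical heart of the proof lies.
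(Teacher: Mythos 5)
Your proposal is essentially the Gaffney--Lazarsfeld strategy transplanted verbatim to $G/P$, and this is precisely the route the paper explains does \emph{not} go through; the two places you flag as ``the technical heart'' are genuine gaps, not technicalities. First, the connectedness step: the Fulton--Hansen theorem for $\mathbb P^n\times\mathbb P^n$ (Theorem~\ref{FH}) only needs $\dim f(X)>n$ because the diagonal in $\mathbb P^n\times\mathbb P^n$ is ``ample'' in the relevant sense; for a general Picard-one $G/P$ the known analogues (Hansen and Debarre for Grassmannians, Faltings for $G/P$, B\u adescu for weighted projective spaces) all carry a correction term measuring the defect of ampleness of the diagonal, so the naive hypothesis $\dim f(X^{\ell+1})>\ell\dim Y$ does not force $f^{-1}(\Delta_{Y^{\ell+1}})$ to be connected. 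Second, the codimension step: global generation (nefness) of the normal bundle $T_Y^{\oplus\ell}$ of the small diagonal only controls intersections with \emph{general translates} of $W$, whereas you need the honest intersection $W\cap\Delta$; the positivity that would give a Bezout-type lower bound on $\dim(W\cap\Delta)$ is \emph{ampleness} of $T_Y$, which by Mori's theorem forces $Y\cong\mathbb P^n$. Your fallback of slicing by preimages of the ample generator also fails, because a hyperplane section of $G/P$ is no longer homogeneous, so the connectedness input cannot be re-invoked after cutting.

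The paper's actual proof is organized quite differently and avoids both obstacles. It inducts on $\ell$ rather than on $\dim Y$, and splits the statement into two halves: (i) Proposition~\ref{prop1} (Lazarsfeld's thesis), which says that for $X$ normal the codimension bound $\operatorname{codim}_X S\le\ell$ holds automatically for every component $S$ of $R_\ell(f)$ \emph{once $R_\ell(f)$ is nonempty}, so the only thing to prove is nonemptiness; and (ii) a nonemptiness argument that takes a component $S$ of $R_{k-1}(f)$ of dimension $\ge n-k+1$ and shows $X\times_Y S$ is connected. The connectedness is obtained not from a Fulton--Hansen statement but from Koll\'ar's theorem (Theorem~\ref{thm3}) on surjectivity of $\pi_1(S')\to\pi_1(Y^0)$ for a general translate $\tau_g\circ f|_S$, followed by a specialization over $G$ (Zariski's Main Theorem on the family $Z\to G$) to remove the genericity of $g$. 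A limit of colliding sheets in a component $T\neq\Delta_S$ of $X\times_Y S$ meeting $\Delta_S$ then produces a point with $e_f\ge k+1$. If you want to salvage your write-up, the piece to replace is the entire Fulton--Hansen/intersection-theoretic core with the Koll\'ar $\pi_1$-surjectivity input, and to route the codimension bound through Proposition~\ref{prop1} instead of through positivity of $T_Y$.
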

For a list of homogeneous spaces with Picard number one, see Mok's paper \cite[page 8]{Mok08}.

The case $\ell=1$ is Zariski's theorem: we get that the ramification locus is a hypersurface. Also note that the case $Y=\mathbb P^n$ is a theorem due to Gaffney and Lazarsfeld (see \cite[Theorem 1]{GL80}).

On the other hand, Debarre proves a theorem similar to Theorem \ref{thm1} when $Y$ is a simple abelian variety and $f: X \rightarrow Y$ does not factor through nontrivial isogenies \cite[Theorem 7.1]{Deb95}\cite[Example 3.5.8]{Laz04}.


The proof of the Gaffney--Lazarsfeld theorem in \cite[page 57]{GL80} uses a Bertini theorem on the irreducibility of general hyperplane sections \cite[Theorem 3.3.1]{Laz04}, which allows induction on the dimension of $\mathbb P^n$,
and the Fulton--Hansen connectedness theorem \cite[Theorem 3.1]{FL81}. We explain some of the problems with generalizing this approach to other homogeneous spaces.




To run the induction on dimension, one would like to exhibit a suitable subvariety of $Y$ that is ``similar'' to $Y$. A step in this direction is \cite[Th\'eor\`eme 6.2]{Deb96}, where Debarre shows that if $Y$ is a Grassmannian, there are natural subvarieties that are also Grassmannians, and such that their pull-backs are irreducible.
Unfortunately, these are not hypersurfaces, which leads to gaps in the induction argument.


The other tool used in \cite{GL80} is the Fulton--Hansen connectedness theorem:
\begin{thm}\label{FH}(Fulton--Hansen connectedness theorem)\cite[Theorem 3.1]{FL81}
Let $X$ be an irreducible projective variety and let $f: X \rightarrow \mathbb P^n \times \mathbb P^n$ be a morphism.
Assume that $\dim f(X) > n$.
Then the inverse image $f^{-1}(\Delta) \subseteq X$ of the diagonal $\Delta \subset \mathbb P^n \times \mathbb P^n$ is connected.
\end{thm}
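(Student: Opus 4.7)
My strategy is to reduce the connectedness statement to the classical linear section connectedness theorem (for an irreducible projective variety $W \subset \mathbb{P}^N$ and a linear subspace $L \subset \mathbb{P}^N$ with $\dim W + \dim L > N$, the intersection $W \cap L$ is connected), via a projective bundle construction that embeds the pair $(\mathbb{P}^n \times \mathbb{P}^n, \Delta)$ into a favorable pair in $\mathbb{P}^{2n+1}$. As a preliminary reduction, apply Stein factorization $f = h \circ g$, where $g: X \to X'$ has geometrically connected fibers and $h: X' \to \mathbb{P}^n \times \mathbb{P}^n$ is finite. Since connected fibers propagate connectedness, one may replace $X$ by $X'$ and assume $f$ is finite onto its image, so that $\dim X = \dim f(X) > n$.

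Next, consider the rank-two bundle $E = \mathcal{O}(1,0) \oplus \mathcal{O}(0,1)$ on $Y = \mathbb{P}^n \times \mathbb{P}^n$, and let $\pi: P = \mathbb{P}(E) \to Y$ be the associated $\mathbb{P}^1$-bundle. The tautological line bundle $\mathcal{O}_P(1)$ has $2n+2$ global sections (split as $(n+1) + (n+1)$), and the induced morphism $\phi: P \to \mathbb{P}^{2n+1}$ contracts the two natural sections of $\pi$ onto disjoint linear subspaces $\Lambda_0, \Lambda_\infty \cong \mathbb{P}^n$, and is an isomorphism elsewhere. Pulling back $f$ along $\pi$ yields $\tilde{f}: \tilde{X} = X \times_Y P \to P$, and composition with $\phi$ gives $\psi: \tilde{X} \to \mathbb{P}^{2n+1}$ with $\dim \psi(\tilde{X}) = \dim f(X) + 1 > n + 1$. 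A direct computation in coordinates shows that $\phi$ identifies $\pi^{-1}(\Delta)$ with the Segre subvariety $V \cong \mathbb{P}^1 \times \mathbb{P}^n \subset \mathbb{P}^{2n+1}$ of dimension $n+1$, codimension $n$, via the map $([a:b], p) \mapsto [a p : b p]$. Because $\pi$ has connected fibers, connectedness of $f^{-1}(\Delta)$ is equivalent to connectedness of $\psi^{-1}(V)$.

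It remains to show $\psi^{-1}(V)$ is connected. To achieve this, I would degenerate $V$ flatly within $\mathbb{P}^{2n+1}$ into a configuration of linear subspaces glued along common loci (so that the special fiber remains connected), and apply the classical linear section connectedness theorem iteratively: at each of the $n$ steps one cuts $\psi(\tilde{X})$ by a single hyperplane of the degenerate configuration, and the dimension bound $\dim \psi(\tilde{X}) \geq n+2$ provides exactly the slack needed to preserve connectedness. Upper semi-continuity of connected components in the flat family then transfers the conclusion back to $\psi(\tilde X) \cap V$, and a standard Stein factorization / Zariski connectedness argument propagates this to the preimage $\psi^{-1}(V) \subset \tilde X$, hence to $f^{-1}(\Delta)$.

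\textbf{Main obstacle.} The most delicate step is the flat degeneration of the Segre variety $V$ -- which is irreducible and not a complete intersection of hyperplanes for $n \geq 2$ -- into a connected configuration of linear subspaces while controlling the behavior of $\psi(\tilde{X})$ under the degeneration, so that upper semi-continuity of connected components legitimately transfers the conclusion. The sharp use of the hypothesis $\dim f(X) > n$, which becomes $\dim \psi(\tilde X) \geq n+2$ in $\mathbb{P}^{2n+1}$, is essential at each step of the iterative reduction, and the dimension inequality is tight throughout.
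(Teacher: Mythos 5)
The paper does not prove this statement; it is quoted as background directly from Fulton--Lazarsfeld \cite[Theorem 3.1]{FL81} (the theorem of Fulton--Hansen), so your attempt must be measured against the classical proof. Your opening moves are exactly the classical ones: Stein factorization to reduce to $f$ finite, and the realization of $\mathbb P^{2n+1}$ as the join of two disjoint copies $\Lambda_0,\Lambda_\infty$ of $\mathbb P^n$ via the $\mathbb P^1$-bundle $P=\mathbb P(\mathcal O(1,0)\oplus\mathcal O(0,1))\to\mathbb P^n\times\mathbb P^n$. But the step where you replace $\Delta$ by the Segre variety $V=\{[ap:bp]\}$ breaks the argument. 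The claimed equivalence ``connectedness of $f^{-1}(\Delta)$ is equivalent to connectedness of $\psi^{-1}(V)$'' is false in the direction you need: $V$ contains both $\Lambda_0$ and $\Lambda_\infty$ (set $[a:b]=[1:0]$ or $[0:1]$), so $\phi^{-1}(V)$ contains the two exceptional divisors $E_0,E_\infty\cong\mathbb P^n\times\mathbb P^n$, and hence $\psi^{-1}(V)$ contains the two sections of $\tilde X\to X$, each a copy of the irreducible variety $X$ meeting the closure of every fiber. Consequently $\psi^{-1}(V)$ is connected as soon as $f^{-1}(\Delta)\neq\emptyset$, regardless of how many components $f^{-1}(\Delta)$ has; the reduction discards exactly the information you are trying to prove. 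The classical proof avoids this by using instead the \emph{linear} subspace $M=\{x_0=y_0,\dots,x_n=y_n\}\cong\mathbb P^n$ of codimension $n+1$: it is disjoint from $\Lambda_0\cup\Lambda_\infty$, meets the line through $[p:0]$ and $[0:q]$ exactly when $[p]=[q]$ (and then in a single point), so its preimage in $\tilde X$ is homeomorphic to $f^{-1}(\Delta)$, and the numerology $\dim\tilde X=\dim f(X)+1>n+1=\operatorname{codim}M$ is exactly what the linear-section connectedness lemma requires.

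Two further steps would fail even if the reduction were repaired. First, your degeneration runs in the wrong direction: in a proper flat family the number of connected components can only drop under specialization (Stein-factorize the total space over a normal base), so proving that the \emph{special} intersection $\psi(\tilde X)\cap V_0$ with a degenerate configuration of linear spaces is connected does not imply that the \emph{general} member $\psi(\tilde X)\cap V$ is connected. The usable direction is the opposite one, which is how Fulton--Hansen prove their linear-section lemma: move $L$ in the Grassmannian, show the preimage of a \emph{general} member is irreducible (Bertini), and specialize to the given $L$ via Zariski's connectedness theorem. Second, connectedness of the image intersection $\psi(\tilde X)\cap V$ does not propagate to the preimage $\psi^{-1}(V)$ by ``a standard Stein factorization argument'': after your reduction $\psi$ is finite onto its image, and finite surjections routinely have disconnected preimages of connected sets (a double cover already does). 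The entire content of the theorem is the preimage statement, so the key lemma must be proved for preimages under finite morphisms directly; it cannot be outsourced to the set-theoretic statement about $W\cap L$ for subvarieties $W\subset\mathbb P^N$.
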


Various other connectedness results have been proved when $Y$ is replaced with a Grassmannian (\cite{Han83}, \cite{Deb96}), with a product of projective spaces (\cite{Deb96}), with a $G/P$ (\cite[Korollar, page 148]{Fal81}), with a weighted projective space (\cite[Theorem 0.1]{Bad96}), or with a product of weighted projective spaces (\cite[Theorem 2.1]{BR08}). Their conclusions however are not strong enough for our purposes.

Therefore, it is unclear how one would prove Theorem \ref{thm1} by an argument paralleling the proof of the Gaffney--Lazarsfeld theorem: we have no good candidate for a subvariety of $G/P$ that we could use for induction, and the known connectedness results are too weak. Instead of running induction on the dimension of $Y$, we use a result of Koll\'ar (\cite[Theorem 2]{Kol13}) facilitating induction on the index $\ell$ of the higher ramification loci.

Finally, recall that if $f:X\to Y$ is a covering map of degree $d$ of smooth projective varieties over $\mathbb C$, the natural map $\mathcal O_Y\to f_*\mathcal O_X$ splits via the trace morphism, so that $f_*\mathcal O_X=\mathcal O_Y\oplus F$ for some vector bundle $F$ of rank $d-1$ on $Y$.
The \emph{associated vector bundle} of the covering is the dual bundle $E:=F^*$ (see \cite[\S6.3.D]{Laz04}).
If $E$ is ample, the conclusion of Theorem \ref{thm1} holds (see \cite[Example 6.3.56]{Laz04}).
However, the ampleness of $E$ is only known in two cases: projective spaces by Lazarsfeld \cite[Proposition 1.2]{Laz80-2} and Lagrangian Grassmannians of maximal isotropic subspaces of symplectic vector spaces by Kim and Manivel \cite[Theorem 2]{KM98}, but not when $Y$ is a general rational homogeneous space.
When $Y$ is a simple abelian variety and $f: X \rightarrow Y$ does not factor through nontrivial isogenies, the ampleness of $E$ is also known \cite[Theorem 7.1]{Deb95}.





As an application of Theorem \ref{thm1}, we show in Corollary \ref{sc} that an irreducible normal projective variety which is a finite cover of small degree of a rational homogeneous space with Picard number one is simply connected.

\section{Preliminaries}
We assume that all varieties are over $\mathbb{C}$.

A \emph{covering map} is a finite surjective morphism $f: X \rightarrow Y$ between irreducible varieties. In this paper, $Y$ is always smooth.

\begin{dfn}\label{dfn1}\cite[page 54]{GL80}\cite[Definition 3.4.7]{Laz04}
Let $f: X \rightarrow Y$ be a covering map with $Y$ smooth.
For $x \in X$, let $U(f(x))$ be a small neighborhood about $f(x)$ (in the classical topology) and let $V(x)$ be the connected component of $f^{-1}(U(f(x)))$ containing $x$.
The \emph{local degree} of $f$ at $x$ is then
\[
e_f(x):= \deg (V(x) \rightarrow U(f(x))).
\]
Note that
\[
f^{-1}(U(f(x))) = \mathop\coprod_{f(x')=f(x)} V(x').
\]
The $V(x')$ are disjoint connected neighborhoods of points in $f^{-1}(f(x))$.
\end{dfn}

From a geometric perspective, if a point has higher local degree, there are more sheets coming together near this point.
We have the basic formula \cite[(1.4)]{GL80}\cite[(3.11), page 216]{Laz04}
\begin{equation}\label{eq:multsum}
\mathop{\sum}_{f(x')=f(x)} e_f(x') = \deg f.
\end{equation}

\begin{dfn}\label{dfn2}\cite[page 217]{Laz04}
Assume the same set-up as in Definition \ref{dfn1}.
The \emph{higher ramification loci} of $f$ are
\[
R_{\ell}(f):= \{x \in X \mid e_f(x) > \ell \}.
\]
\end{dfn}
\noindent Note that $R_1(f)$ is the usual ramification locus of $f$. In general, $R_{\ell}(f)$ is a closed algebraic subset of $X$.

We also use the following result in Koll\'ar's paper \cite[Theorem 2.12]{Kol13}.

\begin{thm}\cite[Theorem 2.(12)]{Kol13}\label{thm3}
Let $Y = G/P$ be a simply connected projective homogeneous space with Picard number one.
For any positive-dimensional irreducible subvariety $Z \subset Y$,
any nonempty Zariski open subset $Y^0 \subset Y$,
and any finite cover $u_Z: \tilde Z \rightarrow Z$, the induced map $\pi_1((\tau_g \circ u_Z)^{-1}(Y^0)) \rightarrow \pi_1(Y^0)$ is surjective for general $g \in G$.
\end{thm}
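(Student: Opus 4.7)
The plan is to combine Kleiman's transversality on $Y = G/P$ with the classical fact that, since $Y$ is smooth and simply connected, $\pi_1(Y^0)$ is generated by meridians around the codimension-one irreducible components of $D := Y \setminus Y^0$. Writing $D = D^{(1)} \cup D^{(\ge 2)}$ as divisorial and higher-codimension parts, one has $\pi_1(Y^0) = \pi_1(Y \setminus D^{(1)})$ because removing $D^{(\ge 2)}$ (real codimension $\ge 4$) does not alter $\pi_1$; and the kernel of $\pi_1(Y \setminus D^{(1)}) \twoheadrightarrow \pi_1(Y) = 1$ is normally generated by meridians, by a Zariski--van Kampen argument. It therefore suffices to exhibit, for each irreducible component $D_i$ of $D^{(1)}$, a loop in $(\tau_g \circ u_Z)^{-1}(Y^0)$ whose image in $\pi_1(Y^0)$ is a meridian around $D_i$.

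Let $B \subset Z$ denote the image of the ramification locus of $u_Z$, so $\dim B \le \dim Z - 1$. Because $Y$ has Picard number one, every nonzero effective divisor on $Y$ is a positive multiple of the ample generator of $\mathrm{Pic}(Y)$, hence ample; in particular each $D_i$ meets every positive-dimensional subvariety of $Y$. Kleiman's transversality, applied to the $G$-action on $Y$, gives for general $g \in G$ that $\tau_g(Z)$ meets each $D_i$ transversally in the smooth locus, while the dimension count
\[
\dim(\tau_g(B) \cap D_i) \le \dim Z - 2 < \dim Z - 1 = \dim(\tau_g(Z) \cap D_i)
\]
produces a point $y_i \in \tau_g(Z) \cap D_i$ lying in the smooth locus of $D_i$, at which $\tau_g(Z)$ is smooth and meets $D_i$ transversally, and which avoids both $\tau_g(B)$ and the other components $D_j$ ($j \ne i$).

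In analytic coordinates $(x_1, \ldots, x_n)$ centered at $y_i$ with $D_i = \{x_1 = 0\}$ and $\tau_g(Z) = \{x_{k+1} = \cdots = x_n = 0\}$ ($k = \dim Z \ge 1$), the small loop $\gamma_\epsilon(\theta) = (\epsilon e^{i\theta}, 0, \ldots, 0)$ lies in $\tau_g(Z) \cap Y^0$ and represents a meridian around $D_i$ in $\pi_1(Y^0)$. Because $y_i$ avoids $\tau_g(B)$, the cover $u_Z$ is étale in an analytic neighborhood of every point of $u_Z^{-1}(\tau_g^{-1}(y_i))$, so the loop $\tau_g^{-1} \circ \gamma_\epsilon$ in $Z$ lifts along $u_Z$ to a loop in $\tilde Z$, necessarily contained in $(\tau_g \circ u_Z)^{-1}(Y^0)$, and its image under $\tau_g \circ u_Z$ is exactly $\gamma_\epsilon$. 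Letting $i$ vary realizes a generating set of $\pi_1(Y^0)$ inside the image of $\pi_1((\tau_g \circ u_Z)^{-1}(Y^0))$, proving surjectivity.

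The main technical obstacle is extracting a single point $y_i$ that is simultaneously a transverse smooth intersection point of $\tau_g(Z)$ with $D_i$ and unramified for $u_Z$; the engine that accomplishes this is Kleiman's transversality on $G/P$, through the strict dimension inequality displayed above. A subordinate but genuine input is the Picard number one hypothesis, which ensures that every codimension-one component of $Y \setminus Y^0$ is actually met by the positive-dimensional translate $\tau_g(Z)$, so that no meridian is left unrealized in the image.
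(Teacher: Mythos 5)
This statement is quoted by the paper from Koll\'ar's work and is not proved there, so there is no internal argument to compare against; your attempt has to be judged on its own. It contains a genuine gap at the very last step: the passage from ``the image of $\pi_1((\tau_g\circ u_Z)^{-1}(Y^0))$ contains a meridian around each divisorial component $D_i$ of $Y\setminus Y^0$'' to ``the image is all of $\pi_1(Y^0)$.'' What is true is that the kernel of $\pi_1(Y\setminus D^{(1)})\to\pi_1(Y)=1$ is \emph{normally} generated by meridians, i.e.\ $\pi_1(Y^0)$ is the normal closure of the set of all meridians (over all components and all connecting paths). The image of the fundamental group of a subspace is only a subgroup, not a normal subgroup, and the subgroup generated by one meridian per component can be proper even when its normal closure is the whole group. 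A concrete witness already lives in the base case $Y=\mathbb P^2$: for $D$ an irreducible cuspidal cubic, Zariski showed that $\pi_1(\mathbb P^2\setminus D)$ is nonabelian; all meridians are conjugate (the curve is irreducible), a single meridian generates a proper cyclic subgroup, yet its normal closure is everything. So ``one transverse intersection point per component, off the branch locus'' is strictly weaker than surjectivity.

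This is exactly the reason the Zariski--van Kampen theorem for a general line $\ell\subset\mathbb P^2$ does not stop after observing that $\ell$ meets $D$ transversally at smooth points: it realizes \emph{all} $\deg D$ meridians along $\ell$ and then uses the homotopy exact sequence of the pencil of lines (a fibration of the blow-up over $\mathbb P^1$) to show that these particular representatives generate. Your argument has no analogue of that fibration/monodromy step, and for a general positive-dimensional $Z$ one cannot simply import it. Everything up to that point is fine --- the reduction to the divisorial part of $D$, Kleiman transversality for a general translate, the dimension counts placing $y_i$ off $\tau_g(B)$, off $\mathrm{Sing}(D_i)$ and off the other components, and the local lifting of the small loop through the unramified locus of $u_Z$ --- but it only produces certain meridians inside the image. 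Closing the gap between ``contains a normally generating set'' and ``is the whole group'' is essentially the content of Koll\'ar's proof, which must exploit the full $G$-action (the family of translates $\tau_g(Z)$ sweeping out $Y$), not a single general translate meeting each divisor once.
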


In the theorem, $\tau_g: Y \rightarrow Y$ is the left translation by the group element $g \in G$.

\begin{rem}\label{rem1}
Koll\'ar's result also implies the following.
For any irreducible subvariety $Z \subset Y \times Y$ such that
neither of the two projections $Z \rightarrow Y$ is constant,
any nonempty Zariski open subset $Y^0 \subset Y \times Y$,
and any finite cover $u_Z: \tilde Z \rightarrow Z$,
the induced map $\pi_1((\tau_{(g_1, g_2)} \circ u_Z)^{-1}(Y^0)) \rightarrow \pi_1(Y \times Y)$ is surjective for general $(g_1, g_2) \in G \times G$.
\end{rem}

The key point here is that every positive-dimensional algebraic subvariety of a projective homogeneous space with Picard number one is nondegenerate and so is any subvariety $Z$ as in Remark \ref{rem1}.


\section{Proof of Theorem \ref{thm1}}

In this section we prove Theorem \ref{thm1}, and Corollary \ref{sc} states that normal covers of small degree of rational homogeneous spaces of Picard number one are simply connected.


\begin{proof}[Proof of Theorem \ref{thm1}]
First, we reduce the case to $X$ normal.
Let $\bar{n}: \bar{X} \rightarrow X$ be the normalization of $X$. Then $f\circ\bar{n}: \bar{X} \rightarrow Y$ is also a covering map.
Assume that Theorem \ref{thm1} holds for $f\circ\bar{n}: \bar{X} \rightarrow Y$,
so $R_k(f\circ\bar{n})$ is nonempty and $\mathrm{codim}_X R_k(f\circ\bar{n}) \leq k$.
Therefore, we have $\mathrm{codim}_X \bar{n}(R_k(f\circ\bar{n})) \leq k$.
Since $R_k(f)\supseteq\bar{n}(R_k(f\circ\bar{n}))$, we obtain $\mathrm{codim}_X R_k(f) \leq k$.
So we can assume $X$ normal in the rest of the proof.

We use induction on $\ell$.
For $\ell=1$, Zariski's purity theorem gives the result.
Now assume that Theorem \ref{thm1} holds for all $\ell \leq k-1$.
By the following result of Lazarsfeld, it suffices to show that $R_k(f)$ is nonempty.

\begin{prop}\label{prop1}
\cite[Theorem 2.2]{Laz80}
Let $f: X \rightarrow Y$ be a covering map between irreducible varieties with $X$ normal and $Y$ nonsingular. If $R_{\ell}(f)$ is nonempty,
for every irreducible component $S$ of $R_{\ell}(f)$, we have $\mathrm{codim}_XS \leq \ell$.
\end{prop}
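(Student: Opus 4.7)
My plan is to establish the codimension bound by a local calculation combining the primitive element theorem with Krull's Hauptidealsatz. The question is local on $Y$, so I would pick a general point $x$ of an irreducible component $S$ of $R_\ell(f)$, set $y = f(x)$, and restrict to the connected component $V(x) \subseteq f^{-1}(U)$ from Definition \ref{dfn1}; this produces a finite surjective map $V(x) \to U$ of degree $e := e_f(x) \geq \ell + 1$, with $U$ a small analytic neighborhood of $y$ and $x$ the unique point of $V(x)$ over $y$.

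After further shrinking $U$, the primitive element theorem applied to the generic fiber of $V(x) \to U$ yields a monic polynomial $q(t) \in \mathcal{O}_U[t]$ of degree $e$ such that $V(x)$ is (the normalization of) the closed subscheme of $U \times \mathbb{A}^1$ cut out by $q(t)$. Under this presentation the local degree at $x' \in V(x)$ equals the multiplicity of $t(x')$ as a root of the fiber polynomial $q_{f(x')}(t) \in \kappa(f(x'))[t]$. Hence the condition $e_f(x') > \ell$ is defined scheme-theoretically by the simultaneous vanishing of the first $\ell$ derivatives $q'(t), q''(t), \ldots, q^{(\ell)}(t)$ along $V(x)$, since $q(t) = 0$ on $V(x)$ automatically. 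This exhibits $R_\ell(f) \cap V(x)$ locally as the zero scheme of $\ell$ functions, so Krull's height theorem forces every irreducible component to have codimension at most $\ell$ in $V(x)$, and therefore in $X$.

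The main obstacle will be handling the possible failure of flatness. Since $X$ is only assumed normal and need not be Cohen--Macaulay, $f$ can fail to be flat and $V(x)$ may not be a hypersurface in $U \times \mathbb{A}^1$ but only the normalization of one; a priori the pullback of the $\ell$ equations $q^{(i)}(t)$ to $V(x)$ might cut out more than expected. I would manage this by first running the Hauptidealsatz argument on the hypersurface model $\mathrm{Spec}(\mathcal{O}_U[t]/q(t))$ and then transferring the conclusion to $V(x)$ via the finite birational normalization map, under which codimensions of closed subsets are preserved and the local degree function matches up. Once this bookkeeping is settled the estimate $\mathrm{codim}_X S \leq \ell$ follows uniformly in the choice of component $S$.
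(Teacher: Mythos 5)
Your reduction to the hypersurface model is where the argument breaks down. Write $n\colon V(x)\to H=\{q=0\}\subset U\times\mathbb{A}^1$ for the finite birational map induced by the primitive element $t$. On the flat model $H$ your identification of the local degree with the root multiplicity of the fibre polynomial is correct, so $R_{\ell}(H\to U)$ really is the zero locus of $q',\dots,q^{(\ell)}$ on $H$ and Krull applies there. But the local degree does \emph{not} ``match up'' under $n$: the correct relation is $e_{H\to U}(h)=\sum_{x'\in n^{-1}(h)}e_f(x')$, so on the normalization the local degree can only split into smaller pieces wherever $n$ fails to be injective, i.e.\ wherever $t$ fails to separate the points of an actual (not just generic) fibre. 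For instance, for $f\colon\mathbb{C}^2\to\mathbb{C}^2$, $(a,s)\mapsto(a,s^2)$ with primitive element $t=as$, the model is $H=\{t^2=a^2b\}$ and the points $(0,b_0,0)$ have $e_{H\to U}=2$ while their two preimages upstairs are unramified. Consequently you only get the inclusion $R_{\ell}(f)\cap V(x)\subseteq n^{-1}\bigl(Z(q',\dots,q^{(\ell)})\bigr)$, which can be strict, and Krull's theorem bounds the codimension of the components of the \emph{larger} set: a component $S$ of $R_{\ell}(f)$ could a priori sit as a proper closed subset of such a component and have codimension $>\ell$. No choice of $t$ repairs this in general --- when $\mathcal{O}_{X,x}$ is not generated by a single element over $\mathcal{O}_{Y,f(x)}$ (possible once the local degree is $\ge 3$), $n$ is never an isomorphism near $x$, and you would need to prove it can at least be made injective on a neighbourhood of $x$, which is false in general and is precisely the non-hypersurface difficulty that makes the proposition nontrivial for normal $X$.

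The paper's proof avoids this entirely: it inducts on $\ell$, takes a component $V\subseteq R_{k-1}(f)$ of dimension $\ge n-k+1$, and applies a connectedness-in-dimension theorem of Gaffney (a local Fulton--Hansen statement) to $F^{-1}(\Delta_Y)$ for $F=f\times(f|_V)\colon X\times V\to Y\times Y$; the component of $F^{-1}(\Delta_Y)$ meeting the diagonal copy of $V$ nontrivially must do so in dimension $\ge n-k$, and the limit inequality $e_f(s^*)\ge k+1$ places that intersection inside $R_k(f)$. In other words, the missing set-theoretic equality in your approach is replaced by a global topological input. Your Krull argument does give a clean proof in the special case where $X$ is locally a hypersurface over $Y$ (e.g.\ for double covers), and it is worth knowing that this is exactly the boundary of what the local method achieves.
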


\begin{proof}[Proof of Proposition 1]
The proof is in the thesis \cite[Theorem 2.2]{Laz80} and it is similar to \cite[Lemma 2.4, Remark 2.5]{Gaf88}.
The proof is by induction on $\ell$.
For $\ell = 0$, we have $R_0(f) = X$.
Assume that the result is true for all $\ell < k$ and that $R_k(f)$ is nonempty, and choose $x \in R_k(f)$.
Let $V \subset R_{k-1}(f)$ be an irreducible component that contains $x$.
By the induction hypothesis, we have $\dim V \geq n-k+1$.
If $V$ is also an irreducible component of $R_k(f)$, we are done.
So we assume that $V$ is not an irreducible component of $R_k(f)$.
Since $f$ is finite surjective and $Y$ is smooth, each fiber has $\deg f$ points counting multiplicities by \eqref{eq:multsum}.
Then there exist $p \in V, q \in X, p \neq q$, and $f(p) = f(q)$.

Consider the map $F:= f \times (f\mid_{V}): X \times V \rightarrow Y \times Y$, and let $\Delta_Y \subset Y \times Y$ denote the diagonal.
From the existence of $p$ and $q$, we deduce the existence of a component $T \subset F^{-1}(\Delta_Y)$ such that $T \neq V \times V$ and $T \cap (V \times V) \neq \emptyset$.
We have $p_1(T \cap (V\times V)) \subset R_k(f)$, where $p_1$ is the first projection.
By \cite[Lemma 2.4]{Gaf88}, the set $F^{-1}(\Delta_Y)$ is connected in dimension $n-k$, therefore $\dim\, R_k(f) \geq n-k$, which is the desired result.
\end{proof}

\begin{rem}
Recall that a Noetherian scheme $X$ is connected in dimension $k$ if $\dim X > k$ and $X \backslash T$ is connected for every closed subset $T \subset X$ with $\dim T < k$.
For a brief list of properties of $k$-connectedness, see \cite[Section 3.3.C]{Laz04}.
\end{rem}

We now show that $R_k(f)$ is nonempty.
By the induction hypothesis,
we can choose an irreducible component $S$ of $R_{k-1}(f)$ with $\dim S \geq n-k+1 \geq 1$,
so that $f(S)$ is a positive-dimensional subvariety in $Y$, and we can apply Theorem \ref{thm3} to $f(S)$.

Take $Y^0 = Y\backslash f(R_1(f))$ and $X^0 = f^{-1}(Y^0)$.
Then $Y^0$ is a nonempty Zariski open subset of $Y$ since ${\rm codim}_X R_1(f)\geq 1$.
Let $g \in G$ be a general element, so that $(\tau_g\circ f(S)) \cap Y^0 \neq \emptyset$.
Set $S' := (\tau_g\circ f|_S)^{-1}(Y^0)$.
It is a nonempty open subset of $S$ and $S$ is irreducible, so $S'$ is irreducible.
We have a diagram
\[
\xymatrix{
X^0 \mathop{\times}_{Y^0} S' \ar[r] \ar[d] & S' \ar[d] \ar@{^{(}->}[r] & S \ar[dd]^{\tau_g\circ f} \\
X^0 \ar[r]^{\mathrm{\acute{e}tale}} \ar@{^{(}->}[d] & Y^0 \ar@{^{(}->}[dr] & \\
X \ar[rr]^f & & Y }
\]
By Theorem \ref{thm3}, the map
\[
\pi_1(S') \rightarrow \pi_1(Y^0)
\]
is surjective, hence $X^0 \mathop{\times}_{Y^0} S'$ is connected.
Note that $X^0 \mathop{\times}_{Y^0} S'$ is a nonempty open subset of $X \times_Y S_g$,
where $S_g\rightarrow Y$ is the twisted morphism $\tau_g\circ f|_S: S \rightarrow Y$.
By \cite[Theorem 2]{Kle74} and since the rational points of $G$ are dense in $G$, we have that for general $g \in G$, the fiber product $X \times_Y S_g$ is equidimensional.
Since $f$ is finite and surjective, each irreducible component $\Sigma$ of $X \times_Y S_g$ dominates $S$,
so the image of the projection from $\Sigma$ to $S$ intersects $S'$,
which implies that $\Sigma \cap X^0 \mathop{\times}_{Y^0} S' \neq \emptyset$,
and then $X^0 \mathop{\times}_{Y^0} S'$ is dense in $X \times_Y S_g$.
Since $X^0 \mathop{\times}_{Y^0} S'$ is connected,
$X \times_Y S_g$ is connected for $g$ general \cite[Chapter 1.4, Problem 1]{Bre93}.
We claim that $X \times_Y S$ is connected.
Set
\[
Z := \{(x, s, g) \in X \times S \times G \ | \ g \cdot f(s) = f(x) \}.
\]
Then $Z$ is irreducible because the fibers of the projection $Z \rightarrow X \times S$ are all isomorphic to $P$.
The fibers of the projection $Z \rightarrow G$ above $g$ is $X \times_Y S_g$;
it is connected for $g$ general, hence connected for all $g$ by Zariski's Main Theorem, because $G$ is normal.

If $X \mathop{\times}_{Y} S = \Delta_S = \{(s, s) \in X\times X \mid s \in S \}$,
which means that $f$ is one-to-one over points of $f(S)$,
we have $e_f(s) = d \geq k+1$ for $s \in S$ and the result is true.
Now we assume that $X \mathop{\times}_{Y} S \neq \Delta_S$.
We know that $X \mathop{\times}_{Y} S$ is connected, so there is an irreducible component $T$ of
$X \mathop{\times}_{Y} S$ such that
$T \neq \Delta_S$ and $T \cap \Delta_S \neq \emptyset$.
We can pick a sequence $(t_h, s_h) \in T$ such that
$t_h \neq s_h$ and $f(t_h)=f(s_h)$ for
every $h \geq 1$, and
\[\lim_{h \rightarrow \infty} t_h = \lim_{h \rightarrow \infty} s_h =: s^*.
\]
We have $e_f(t_h) \geq 1, e_f(s_h) \geq k$, hence
\[
e_f(s^*) \geq k+1 \ \text{(this is in \cite[(3.12)]{Laz04})}
\]
which gives $s^* \in R_k(f) \neq \emptyset$.
So $R_k(f)$ is nonempty and the proof is complete.
\end{proof}



We can prove the following corollary of Theorem \ref{thm1}.
\begin{cor}\label{sc}
Let $f: X \rightarrow Y$ be a covering map from an irreducible normal Cohen-Macaulay projective variety $X$ to a homogeneous space $Y$ with Picard number one.
If $\deg f \leq \dim X$, $X$ is simply connected.
\end{cor}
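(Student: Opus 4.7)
The plan is to rule out every nontrivial connected finite \'etale cover of $X$ by composing it with $f$ and invoking Theorem \ref{thm1}; this yields the asserted simple connectedness (interpreted in the algebraic sense, i.e.\ $\pi_1^{\mathrm{\acute{e}t}}(X) = 1$).

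First I would take an arbitrary connected finite \'etale cover $\pi : \tilde X \to X$ of degree $e \geq 1$. Since $\pi$ is \'etale over an irreducible normal Cohen--Macaulay projective variety and $\tilde X$ is connected, $\tilde X$ remains irreducible, normal, Cohen--Macaulay, and projective, of the same dimension $n$. The composition $\tilde f := f \circ \pi : \tilde X \to Y$ is then a covering map of degree $de$, and the aim becomes to show $e = 1$.

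The key step is to apply Theorem \ref{thm1} to $\tilde f$ with index $\ell = d$. Assuming for contradiction that $e \geq 2$, the hypothesis $\deg f \leq \dim X$ gives $d \leq n$, while $e \geq 2$ combined with $d \geq 1$ gives $d \leq de - 1$, so the numerical bound $\ell \leq \min\{de - 1, n\}$ is met. Theorem \ref{thm1} then guarantees $R_d(\tilde f) \neq \emptyset$.

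I would close the argument using the multiplicativity of local degrees under composition of covering maps. Since $\pi$ is \'etale, $e_\pi(\tilde x) = 1$ for every $\tilde x$, hence
\[
e_{\tilde f}(\tilde x) = e_\pi(\tilde x) \cdot e_f(\pi(\tilde x)) = e_f(\pi(\tilde x)) \leq d,
\]
forcing $R_d(\tilde f) = \emptyset$, in direct contradiction with the previous paragraph. Therefore $e = 1$, and $X$ admits no nontrivial connected finite \'etale cover. The only point demanding care is verifying that the \'etale cover $\tilde X$ still falls under the hypotheses of Theorem \ref{thm1}; this is where the normal Cohen--Macaulay hypothesis on $X$ earns its keep, by transferring through the \'etale map $\pi$ to $\tilde X$ so that Theorem \ref{thm1} can indeed be applied to $\tilde f$.
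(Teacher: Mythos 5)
Your argument correctly rules out nontrivial connected finite \'etale covers of $X$: composing a degree-$e$ cover $\pi:\tilde X\to X$ with $f$, checking that $\tilde X$ is irreducible and projective (connected and normal suffices here), applying Theorem \ref{thm1} with $\ell=d$, and using $e_{f\circ\pi}(\tilde x)=e_f(\pi(\tilde x))\le d$ does force $e=1$. This is exactly the first sentence of the paper's proof, which cites \cite[Corollary 3.4.10]{Laz04} for it. The gap is that the corollary asserts simple connectedness in the topological sense, $\pi_1(X)=1$ for the classical fundamental group, and this is strictly stronger than the triviality of $\pi_1^{\mathrm{\acute{e}t}}(X)$ that you establish: a finitely generated group can be infinite while having no proper finite-index subgroups, so ``no nontrivial finite \'etale covers'' does not by itself yield $\pi_1(X)=1$. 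You flag this by declaring that you interpret the statement algebraically, but that amounts to proving a weaker statement than the one asserted.

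The bulk of the paper's proof is devoted precisely to this upgrade. It applies Theorem \ref{thm1} with $\ell=\deg f-1\le\dim X-1$ to get that $R_{d-1}(f)$ is nonempty of codimension at most $d-1\le n-1$, hence contains an irreducible curve $T$; since $\sum_{f(x')=f(x)}e_f(x')=d$, the map $f$ is one-to-one over $f(T)$. Passing to the normalization $\bar T$ and applying Koll\'ar's theorem in the form of Remark \ref{rem1} to $(f|_T\circ n,\,f|_T\circ n):\bar T\to Y\times Y$, the paper shows that $\pi_1(\bar T)\to\pi_1(X\times\bar T)$ is surjective, which forces $\pi_1(X)=1$. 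None of this appears in your proposal, and it cannot be replaced by a purely \'etale-cover argument. A minor additional point: your closing remark that the Cohen--Macaulay hypothesis is what allows Theorem \ref{thm1} to be applied to $\tilde X$ is off target, since Theorem \ref{thm1} requires only that the source be an irreducible projective variety.
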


\begin{proof}
The algebraic simple connectedness of $X$ follows from Theorem \ref{thm1} as in \cite[Corollary 3.4.10]{Laz04}.
As in \cite[Example 3.4.11]{Laz04}, by Theorem \ref{thm1} applied with $\ell = \deg f - 1$,
there exists an irreducible curve $T \subset X$ such that $f$ is one-to-one over $f(T)$.
Let $n: \bar{T} \rightarrow T$ be the normalization and consider the map
\[
F:= (f\mid_{T}\circ\ n, f\mid_{T}\circ\ n): \bar{T} \rightarrow Y \times Y.
\]
Note that
$(X \times \bar{T}) \times_{Y \times Y} \bar{T}$ is homeomorphic to $\bar{T}$.
We claim that the map
\[
\pi_1(\bar{T}) = \pi_1((X \times \bar{T}) \times_{Y \times Y} \bar{T}) \rightarrow
\pi_1(X \times \bar{T})
\]
is surjective. 
To see this, note that $Y \times Y$ is homogeneous and Remark \ref{rem1} applies to $F(\bar{T})$.
For a general element $(g_1, g_2) \in G\times G$, where $Y=G/P$, we have $(\tau_{(g_1, g_2)}\circ F(\bar{T})) \cap (Y^0 \times Y^0) \neq \emptyset$, where $Y^0 \times Y^0$ is the unramified locus of $(f, f): X \times X \rightarrow Y \times Y$.
Setting $\bar{T}^0:= (\tau_{(g_1, g_2)}\circ F)^{-1}(Y^0 \times Y^0) \subset \bar{T}$ and $X^0:= f^{-1}(Y^0)$, we have the diagram
\[
\xymatrix{
(X^0 \times \bar{T}^0) \times_{Y^0 \times Y^0} \bar{T}^0 \ar[r] \ar[d] & \bar{T}^0 \ar[d] \ar@{^{(}->}[r] & \bar{T} \ar[dd]^{\tau_{(g_1, g_2)}\circ F} \\
X^0 \times \bar{T}^0 \ar[r]^{\mathrm{\acute{e}tale}} \ar@{^{(}->}[d] & Y^0 \times Y^0 \ar@{^{(}->}[dr] & \\
X \times \bar{T} \ar[rr] & & Y \times Y. }
\]
By Remark \ref{rem1}, $\pi_1(\bar{T}^0) \rightarrow \pi_1(Y^0 \times Y^0)$ is surjective.
Consider the diagram
\[
\xymatrix{
 & \pi_1(\bar{T}) \ar@{=}[d] \\
\pi_1((X^0 \times \bar{T}^0) \times_{Y^0 \times Y^0} \bar{T}^0) \ar[r]\ar[d] & \pi_1((X \times \bar{T}) \times_{Y \times Y} \bar{T}) \ar[d] \\
\pi_1(X^0 \times \bar{T}^0) \ar[r] &  \pi_1(X \times \bar{T}).
}
\]
The left vertical map is surjective since $\pi_1(\bar{T}^0) \rightarrow \pi_1(Y^0 \times Y^0)$ is surjective by Remark \ref{rem1} and $X^0 \times \bar{T}^0 \rightarrow Y^0 \times Y^0$ is \'etale.
The two horizontal maps are also surjective by the irreducibility of
$(X \times \bar{T}) \times_{Y \times Y} \bar{T}$ and of $X \times \bar{T}$.
Therefore, the map $\pi_1(\bar{T}) \rightarrow \pi_1(X \times \bar{T})$ is surjective, which implies $\pi_1(X) = 1$, so $X$ is simply connected.
\end{proof}



\section*{Acknowledgments}

The author would like to express his deepest gratitude to Professor J\'anos Koll\'ar for his generous and continuous support, and for some enlightening discussions. The author also offers special thanks to Professor Robert Lazarsfeld for his valuable suggestions and references including the proposition in section 3 appearing in his thesis \cite[Theorem 2.2]{Laz80}. The author wants to thank Doctor Aurel Mihai Fulger for many helpful suggestions.
The author is grateful to the referee for many valuable suggestions and corrections about this paper.
The discussions with Professor Tommaso de Fernex and Professor Karl Schwede also helped the author very much.

\nocite{Laz04}
\bibliographystyle{plain}
\bibliography{bibtex_YC_Tu_1}

\end{document}